\newcommand*{\ie}{i.e.\@\xspace}
\newcommand*{\bG}{\mathbf{G}}
\newcommand*{\diam}{\operatorname{diam}\xspace}
\newcommand{\N}{\mathbb{N}}
\newcommand{\pr}{\mathbb{P}}
\renewcommand{\leq}{\leqslant}
\renewcommand{\geq}{\geqslant}
\newtheorem{thm}{Theorem}[section]
\newtheorem{lem}[thm]{Lemma}
\theoremstyle{definition}
\newtheorem{remark}{Remark}
\crefname{thm}{theorem}{theorems}
\Crefname{thm}{Theorem}{Theorems}
\crefname{lem}{lemma}{lemmas}
\Crefname{lem}{Lemma}{Lemmas}
\crefname{defi}{definition}{definitions}
\Crefname{defi}{Definition}{Definitions}
\crefname{cor}{corollary}{corollaries}
\Crefname{cor}{Corollary}{Corollaries}
\crefname{prop}{proposition}{propositions}
\Crefname{prop}{Proposition}{Propositions}
\crefname{remark}{remark}{remarks}
\Crefname{remark}{Remark}{Remarks}
\begin{document}

\title{The diameter of randomly twisted hypercubes}
\author{Lucas Arag\~ao \and Maur\'icio Collares \and Gabriel Dahia \and Jo\~ao Pedro Marciano}

\address{UERJ, S\~ao Francisco Xavier 524, Rio de Janeiro, RJ, Brazil}
\email{lucas.aragao@uerj.br}

\address{IMPA, Estrada Dona Castorina 110, Jardim Bot\^anico, Rio de Janeiro, RJ, Brazil}
\email{\{gabriel.dahia, joao.marciano\}@impa.br}

\address{Institute of Discrete Mathematics, Graz University of Technology, Steyrergasse 30, 8010 Graz, Austria}
\email{mauricio@collares.org}

\begin{abstract}
  The $n$-dimensional random twisted hypercube $\bG_n$ is constructed recursively by taking two instances of $\bG_{n-1}$, with any joint distribution, and adding a random perfect matching between their vertex sets.
  \citet*{benjamini2022duplicube} showed that its diameter is $O(n\log \log \log n/\log \log n)$ with high probability and at least ${(n - 1)/ \log_2 n}$.
  We improve their upper bound by showing that
  $$\diam(\bG_n) = \big(1 + o(1)\big) \frac{n}{\log_2 n}$$
  with high probability.
\end{abstract}

\maketitle

\section{Introduction}

The random twisted hypercube $\bG_n$ is a model for generating $n$-regular graphs with $2^n$ vertices.
\citet*{benjamini2022duplicube} introduced it by generalising a related model studied by~\citet*{dudek2018randomly}.
It is defined recursively in a similar way as the hypercube $Q_n$: $\bG_1$ is a single edge and
$\bG_{n}$ is defined by adding a perfect matching $M_n$ between two copies of
$\bG_{n-1}$. However, rather than matching vertices according to a (fixed) vertex ordering, as in $Q_n$, or even deterministically,
the matching $M_n$ is chosen uniformly at random and independently in each step.
Moreover, instead of using two equal copies of $\bG_{n-1}$ in the inductive construction,
we allow the pair to be sampled with an arbitrary joint distribution. Two concrete
examples are the \emph{random duplicube} and the aforementioned \emph{independent twisted hypercube} studied
by~\citet*{dudek2018randomly}\footnote{The authors of \cite{dudek2018randomly} use the terminology ``random twisted hypercube'' for the special case where the two copies of $\bG_{n-1}$ are independent; however, we shall instead follow the terminology of~\cite{benjamini2022duplicube}.}. In the former, we sample a single graph from $\bG_{n-1}$ and use two copies of it to construct $\bG_{n}$, where, in the latter, the two copies of $\bG_{n-1}$ are chosen independently.

One of the motivations for studying the random twisted hypercube is that it can be understood as a middle
ground between the structure of $Q_n$, the $n$-dimensional hypercube, and $H_n$, the random $n$-regular graph on $2^n$ vertices.
To support this intuition, \citeauthor*{benjamini2022duplicube} studied several different properties of $\bG_n$, including its eigenvalue distribution, vertex expansion and diameter; here we will focus on the diameter of $\bG_n$.
Recall that the diameter of $H_n$ is $(1+o(1))\frac{n}{\log_2 n}$ with high probability \citep{bollobas1982diameter} and that the diameter of $Q_n$ is $n$.
The same simple greedy argument that proves the upper bound for $Q_n$ works for $\bG_n$, but it
was shown in \cite{benjamini2022duplicube} that this trivial bound is typically far from
the truth, and in fact $\diam(\bG_n)$ is sublinear in $n$ with high probability.
The authors of \citep{benjamini2022duplicube} also observed that the deterministic lower bound on $\diam(H_n)$ also holds for $\bG_n$.
To be precise, they showed that the bounds
\[\frac{n-1}{\log_2 n}\leq\diam(\bG_n)\leq C\frac{n \log\log\log n}{\log\log n}\]
hold with high probability for some constant $C > 0$.
In the particular case of the random duplicube, they showed that $\diam(\bG_n)=(1+o(1))n/\log_2 n$ with high probability, matching a result of~\cite{dudek2018randomly} for the independent twisted hypercube.
Moreover, they remarked that they ``have no intuition to the correct diameter'' in the general case where the two samples of $\bG_{n-1}$ used to construct $\bG_{n}$ may have arbitrary joint distribution.
In this note, we asymptotically determine the diameter of $\bG_n$.

\begin{thm}\label{thm:diam}
  Let $\bG_n$ be the random twisted hypercube, with arbitrary joint distribution. Then
  \[\diam(\bG_n) = (1+o(1))\frac{n}{\log_2 n}\]
  with high probability as $n\to \infty$.
\end{thm}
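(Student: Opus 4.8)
The bound $\diam(\bG_n)\ge (n-1)/\log_2 n$ being already known, I focus on the matching upper bound, which I would deduce from the following \emph{ball‑growth estimate}: for $\rho_N:=(1/2+o(1))\,N/\log_2 N$ — a specific value, below $(1/2+\varepsilon)\,N/\log_2 N$ for every fixed $\varepsilon>0$ — with high probability every vertex $w$ of $\bG_N$ satisfies $|B_{\bG_N}(w,\rho_N)|\ge 2^{N/2+(\log N)^2}$. Granting this, write $\bG_n=H_0\cup H_1\cup M$ with $H_0,H_1$ two copies of $\bG_{n-1}$ (arbitrary joint distribution) and $M$ the uniform perfect matching between them, which is independent of $(H_0,H_1)$. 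Applying the estimate to $H_0$ and $H_1$, for any $u,v\in\bG_n$ the radius‑$\rho_{n-1}$ balls $A,C$ around $u,v$ inside their own copies satisfy $|A|,|C|\ge 2^{(n-1)/2+(\log n)^2}$ with high probability. Conditioning on $(H_0,H_1)$ leaves $M$ uniform and $A,C$ fixed: if $u,v$ lie in opposite halves, the chance that $M$ contains no $A$–$C$ edge is at most $\exp(-|A||C|/2^{n-1})\ll 4^{-n}$, whence $d(u,v)\le 2\rho_{n-1}+1$ off a rare event; if $u,v$ share a half $H_0$, I would instead seek an edge $\{p,q\}$ of $H_1$'s own top matching $\mu$ with $M^{-1}(p)\in A$ and $M^{-1}(q)\in C$ — over the $\approx 2^{n-2}$ edges of $\mu$ the number of such edges is a sum of negatively associated indicators of mean $\approx|A||C|/2^n\ge 2^{2(\log n)^2-1}$, so one exists except with probability $\exp(-\Omega(2^{2(\log n)^2}))$, and it yields a walk $u\rightsquigarrow M^{-1}(p)\to p\to q\to M^{-1}(q)\rightsquigarrow v$ of length $\le 2\rho_{n-1}+3$. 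A union bound over the $\le 4^n$ pairs then gives $\diam(\bG_n)\le 2\rho_{n-1}+3=(1+o(1))\,n/\log_2 n$, which together with the lower bound proves the theorem.

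To prove the ball‑growth estimate I would run a breadth‑first search from $w$, revealing the random matchings only as queried; here it is convenient to unravel $\bG_N$ so that its edge set is the union of the top matchings $\mu_C$ over all copies $C$ of $\bG_2,\dots,\bG_N$, each $\mu_C$ a uniform bijection between the two halves of $C$, chosen independently of those halves. The plan is to show that each BFS step multiplies the current ball $B$ by $N^{1-o(1)}$; since $\log_2(N/2)=(1-o(1))\log_2 N$, it is enough to keep track of the $\ge N/2$ neighbours of a ball vertex $w'$ obtained through a $\mu_C$ with $C$ a copy of some $\bG_k$, $k>N/2$, each of which lands uniformly in a block of size $>2^{N/2}$ disjoint from $w'$'s own $\bG_{\lceil N/2\rceil}$‑block. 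Maintaining the invariant that $B$ is \emph{spread out} — at most $2^{N/2}/N^2$ of its vertices in any one $\bG_{\lceil N/2\rceil}$‑copy, a weak requirement since the typical count is about $2^{(\log N)^2}$ — such a neighbour lies in $B$ with probability only $O(1/N^2)$, so all but a $o(1)$‑fraction of the $\approx(N/2)|B|$ candidates fall outside $B$; and the expected number of coincidences among them (pairs of these edges hitting the same vertex) is at most $|B|/4$, because only nested pairs of the corresponding blocks can contribute and each such pair does so with probability $\le 2^{-N/2}$. Hence $|B(w,t+1)|\ge(1-o(1))(N/2)\,|B(w,t)|$, and since a short initial segment already yields $|B(w,c_0)|\ge N^{c_0}/2$ (using that each vertex lies in essentially no short cycle besides its own $\bG_2$‑subcopy, so small balls have almost their tree size), iterating to radius $\rho_N$ produces a ball of size at least $\big((1-o(1))N/2\big)^{\rho_N}$, which is at least $2^{N/2+(\log N)^2}$ for the stated choice of $\rho_N$.

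The substance of the argument — and the main obstacle — is to make the previous paragraph quantitative, with a failure probability small enough to survive the union over the $2^N$ starting vertices and the $\rho_N$ steps. The spread‑out invariant persists because the high‑level matchings scatter $B$ almost uniformly over the $2^{\lfloor N/2\rfloor}$ blocks, so the count in a fixed block is an exponentially concentrated sum of negatively associated indicators about its small mean; the number of new neighbours and the number of coincidences are likewise exponentially concentrated once $|B|$ exceeds $\operatorname{poly}(N)$, with the first few constant‑radius steps handled by a direct count of short cycles (which, in turn, uses that two neighbours of a vertex are never adjacent and have at most one candidate common neighbour). The delicate point is that the arbitrary joint distribution makes the matchings only \emph{locally} uniform — each $\mu_C$ has uniform marginals and is independent of the two halves it joins, but the coupling of those halves may correlate the matchings inside one half with the structure of the other — so the exploration has to be organised, and the randomness of each matching used, in a way that keeps every queried edge uniform given the revealed past; the freshness of the top matching across the two halves of every copy is what makes this manageable, exactly as it does (in a degenerate way) for the random duplicube. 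With the ball‑growth estimate available in this quantitative form, the reduction of the first paragraph finishes the proof that $\diam(\bG_n)=(1+o(1))\,n/\log_2 n$ with high probability.
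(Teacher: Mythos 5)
Your reduction (grow balls of radius $\rho_{n-1}\approx\tfrac12(n-1)/\log_2(n-1)$ to size $2^{(n-1)/2+(\log n)^2}$ in each half, then use the independent uniform matching $M_n$ — or, for two vertices in the same half, the top matching of the other half pulled back through $M_n$ — to join two such balls) is a sound and classical strategy, and is genuinely different from the paper's. But the proof has a real gap exactly where you locate it: the ball-growth estimate is asserted, not established. The BFS analysis needs every queried matching edge to be conditionally uniform given the revealed past, and this is precisely what the arbitrary joint distribution threatens. The top matching of each copy is independent of that copy's two halves, but top matchings of \emph{sibling} copies at the same level may be arbitrarily correlated with each other and with the graph structure of the opposite half (in the duplicube they are literally equal under the identification of the halves), so once the exploration has revealed part of one half, matching edges queried in the other half need not be fresh. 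Saying that ``the freshness of the top matching across the two halves of every copy is what makes this manageable'' names the hope, not the argument. A second quantitative gap: you need the estimate to hold for all $2^N$ starting vertices, hence failure probability $o(2^{-N})$ per vertex, which no concentration inequality can deliver in the early BFS rounds when $|B|=\mathrm{poly}(N)$; your proposed fix (a whp count of short cycles) gives a global $1-o(1)$ event rather than a per-vertex $2^{-N}$ bound, which is the standard remedy for random regular graphs, but verifying it here again runs into the same correlation problem. Finally, your target ball size $2^{N/2+(\log N)^2}$ sits \emph{above} the birthday threshold $2^{N/2}$, so the last rounds of the BFS are exactly where the coincidence bookkeeping is most delicate; your spread-out invariant is the right idea but the stated coincidence bound ($\leq |B|/4$) only follows after using that invariant on \emph{both} endpoints of each colliding pair, which needs to be written out.

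For contrast, the paper avoids all of this by never growing a large ball. It proves a \emph{deterministic} lower bound $|B_{\bG_n}(v,t)|\geq\binom{n+1}{t}$ (valid for every realisation, hence immune to correlations and needing no union bound), takes $t=\omega(1)$ but tiny, and at each of $\approx n/(t\log n)$ steps uses the randomness of a \emph{single} top-level matching $M_k$ — conditioned on the two halves it joins, which is all the independence the model guarantees — to find, within distance $t+1$, a vertex whose largest disagreeing coordinate with the target drops by $(t-2)\log k$. Each step costs $t+1$ edges and gains $(t-2)\log k$ coordinates, giving $(1+o(1))n/\log_2 n$ in total, with per-pair failure probability $\exp(-2k^{3/2})$ that comfortably survives the union bound. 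If you want to salvage your route, the paper's deterministic ball bound would at least let you seed the BFS without any probabilistic input for the first $c_0$ rounds; but the correlation issue in the main growth phase would still have to be confronted head-on, and that is the substance you have left open.
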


We now give a brief sketch of the proof of \Cref{thm:diam}, starting with a more naive
approach, and then explaining how to improve it. In what follows, all logarithms are base
$2$. Identify the vertices of $\bG_n$ with $\{0,1\}^n$ so that every induced subgraph of
$\bG_n$ that is given by a set of vertices with the same final $n-k$ coordinates is
distributed as $\bG_k$. Let $u,v \in V(\bG_n)$ be distinct, arbitrary vertices that we want to
connect using as few edges as we can and define $\alpha(u,v)$ to be the
largest coordinate in which they differ. Note first that $d(u,v) \leq \alpha(u,v)$ by the
following greedy approach: $u$ has a (unique) neighbour $w$ with
$\alpha(w,v)\leq\alpha(u,v) - 1$, so we may add the edge $uw$ to our path and repeat with
$u$ replaced by $w$ to finish in at most $\alpha(u,v)$ steps, \ie we reach $v$ when
$\alpha(w,v) = 0$.

The key idea to prove \Cref{thm:diam} is to use a less greedy approach.
Let $M_k(u)$ denote
the random matching between the distinct copies of $\bG_{k-1}$ with final coordinates $(u_k,\ldots,u_n)$ and $(1-u_k,u_{k+1},\ldots,u_n)$, and let $t = t(n)$ be a slowly growing function of $n$.
In each step, we shall choose an edge $u'v'$ of $M_k(u)$ for $k = \alpha(u, v)$ such that $d(u,u') \leq t$ and $\alpha(v',v)$ is as small as possible.
If $\alpha(v',v)\geq n/(\log n)^2$, we repeat the process with $v'$ in place of $u$.
Otherwise, since $d(v',v)\leq \alpha(v',v)$ by the previous naive approach, we may finish the process by connecting $v'$ to $v$.
The number of steps taken, and therefore the length of the constructed path, will depend on how much $\alpha(u,v)$ decreases when we replace $u$ by $v'$.
Since $u'v'$ is chosen to be an edge of $M_k(u)$, the existence of a ``good'' choice of $v'$ depends on the size of the set $B$ of vertices of $\bG_n[\{w:\alpha(w,u) < k\}]$ within distance $t$ of $u$.

This idea motivates the following ``quasirandomness'' property: for every $k \geq n / (\log n)^2$ and every pair of vertices $u, v$ such that $\alpha(u, v) = k$,
there is a vertex $v' \in V(\bG_n)$ at distance at most $t+1$ from $u$ such that
$\alpha(v',v) \leq k - (t - 2)\log n$. To show that it holds for $\bG_n$, we will use a union bound over pairs of vertices. Fix $u$ and $v$ with $\alpha(u,v) = k$, and reveal the two samples of $\bG_{k-1}$ that contain $u$ and $v$. We then use the randomness of the matching $M_k(u)$ (and its independence of the choice of each $\bG_{k-1}$) to bound the probability that such a vertex $v'$ does not exist. To prove this, we first show (see \Cref{lem:balls}) that, deterministically,
$|B| \geq  (k/t)^t$. Since the neighbours of $B$ in $M_k(u)$ form a random subset of the sample of $\bG_{k-1}$ that contains $v$, we can deduce (see \Cref{lem:whp}) that a suitable $v'$ exists with probability at least $1-\exp(-n^{3/2})$.

The above quasirandomness property is useful because it implies that the claimed upper bound on the
diameter holds \emph{deterministically}. As at each step we grow our path by at most
$t+1$ and $\alpha(u,v) \leq n$, we take at most $n/((t - 2)\log n)$ steps to reach a
vertex that can be greedily connected to $v$ using at most $n / (\log n)^2$ edges. Since
$t = \omega(1)$, the resulting path has length at most \[(t+1) \frac{n}{(t - 2)\log n} +
\frac{n}{(\log n)^2} = (1+o(1))\frac{n}{\log n}.\] The rest of the note is dedicated to
formalising this proof.

\section{The proof}\label{sec:proof}

The \emph{$n$-dimensional random twisted hypercube} $\bG_n$ is defined recursively by placing a random matching between two instances of $\bG_{n-1}$.
Formally, we set $\bG_1=K_2$ and $M_1=E(K_2)$, and for every $n \geq 2$ we define $\bG_n$ in the $n$-th recursive step as follows.
We fix a distribution on pairs of graphs with both marginal distributions equal to $\bG_{n-1}$; that is, a pair $(G_{n-1}^{(0)}, G_{n-1}^{(1)})$ sampled according to this distribution satisfies $\mathbb{P}(G_{n-1}^{(i)} = G) = \mathbb{P}(\bG_{n-1} = G)$ for every graph $G$ and every $i \in \{0,1\}$.
We then sample $(G_{n-1}^{(0)}, G_{n-1}^{(1)})$ according to this distribution, and choose a perfect matching $M_n$ between $V(G_{n-1}^{(0)})$ and $V(G_{n-1}^{(1)})$ uniformly at random and independently of all other choices made so far.
Then, $\bG_n$ will be the disjoint union of $G_{n-1}^{(0)}$ and $G_{n-1}^{(1)}$ with the perfect matching $M_n$ in between, \ie,
\begin{equation*}
	V(\bG_n)=V(G_{n-1}^{(0)})\cup V(G_{n-1}^{(1)}) \qquad \text{ and } \qquad E(\bG_n)=E(G_{n-1}^{(0)})\cup E(G_{n-1}^{(1)}) \cup M_n.
\end{equation*}

We stress that the choice of this perfect matching at each step is independent of all earlier choices.
Observe we can assume that $V(\bG_n)=\{0,1\}^n$ and that for $i=0,1$ the set of vertices with final coordinate equal to $i$ is the vertex set of $G_{n-1}^{(i)}$.
For each $u \in V(\bG_n)$ and $1 \leq k < n$, the subgraph induced by the set of vertices with the same $n-k$ final coordinates as $u$ is an instance of $\bG_k$.
Therefore, $\bG_{n}$ contains exactly $2^{n-k}$ (correlated) samples of $\bG_{k}$.

Recall that, for each $v \in V(\bG_n)$, we write $M_k(v)$ to denote the (last to be generated) random matching that appears inside the copy of $\bG_{k}$ that contains $v$. It will also
be useful to index each neighbour of $v$ by the recursive step that introduced it,
so $N(v)=\big\{\eta_1(v), \ldots, \eta_n(v)\big\}$ where $\eta_k(v)$ is the neighbour of $v$
given by the matching $M_k(v)$. Therefore, by construction, $\eta_k(v)$ differs from $v$
in the $k$-th coordinate and coincides with $v$ in all coordinates greater than $k$. More
precisely, letting $u=\eta_k(v)$, we have $u_k\neq v_k$ and $u_i=v_i$ for every $k<i \leq
n$.

For ${u,v \in V(\bG_n)}$, define $\alpha(u,v)$ to be the largest coordinate in which $u$ and $v$ differ, \ie,
\begin{equation*}
  \alpha(u,v) = \left\{
    \begin{array} {c@{\quad \textup{if} \quad}l}
      \max \big\{j \in [n]: u_j \neq v_j\big\} & u \neq v \\[+1ex]
      0 & u = v.
    \end{array}\right.
\end{equation*}
Given $u, v \in V(\bG_n)$, recall that in the greedy approach we can construct a path from $u$ to $v$ of length at most $\alpha(u,v) \leq n$.
Using the new notation to describe it, we start from $u$ and iteratively replace $u$ by $\eta_{\alpha(u,v)}(u)$, that is, we use the matching $M_{\alpha(u,v)}(u)$ to ``flip'' the largest coordinate of $u$ in which it differs from $v$.

The crux of the proof of \Cref{thm:diam} is the following lemma, which shows that the
less greedy approach we outlined in the introduction is indeed more efficient. It implies
that $\bG_n$ has the quasirandomness property defined in the introduction with high
probability.

\begin{lem}\label{lem:whp}
  Let $t \geq 3$ and let $n,k \in \N$ be sufficiently large integers such that $4 t^{2t} \leq k \leq n$ and $k \geq n/(\log n)^2$.
  Let $E_{k,t}$ denote the event that for every $u,v \in V(\bG_n)$ such that $\alpha(u,v)=k$, some vertex $w$ of the graph $G' = \bG_n\big[\big\{w : w_i = u_i, \forall i \geq k \big\}\big]$ satisfies
  \[d_{G'}(u, w) \leq t \quad\text{ and }\quad \alpha(\eta_k(w), v) \leq k - (t-2) \log k.\]
  Then $\pr(E_{k,t}^c) \leq \exp(-k^{3/2})$.
\end{lem}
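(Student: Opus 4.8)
The plan is to fix a pair $u, v \in V(\bG_n)$ with $\alpha(u,v) = k$, expose the two samples of $\bG_{k-1}$ sitting inside $G'$ (the one containing $u$ and the one containing $v$, joined by the matching $M_k(u)$), and then use only the randomness of $M_k(u)$ to bound the failure probability for this pair; a union bound over the at most $2^{2n} \leq \exp(k^2)$ choices of $(u,v)$ then finishes the argument, since $\exp(k^2)\exp(-n^{3/2}) \leq \exp(-k^{3/2})$ once we show the per-pair failure probability is at most $\exp(-n^{3/2})$ (using $k \geq n/(\log n)^2$, so $n^{3/2}$ dominates $k^2$). Write $B = \{w : w_i = u_i\ \forall i \geq k,\ d_{G'}(w,u) \leq t\}$ for the ball of radius $t$ around $u$ inside the copy of $\bG_{k-1}$ containing $u$. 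The set $B$ is determined by the two exposed samples of $\bG_{k-1}$, so it is independent of $M_k(u)$; by \Cref{lem:balls}, $|B| \geq (k/t)^t$ deterministically, using $4t^{2t} \leq k$ to guarantee the hypotheses of that lemma hold (in particular that $B$ stays inside a single $\bG_{k-1}$ copy and the radius-$t$ ball has the claimed lower bound).

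Next I would set up the key probabilistic estimate. The matching $M_k(u)$ maps $B$ to a uniformly random subset $M_k(u)(B)$ of size $|B|$ inside $V(\bG_{k-1}^{(v)})$, the sample containing $v$. We want some $w \in B$ whose partner $\eta_k(w) = M_k(u)(w)$ satisfies $\alpha(\eta_k(w), v) \leq k - (t-2)\log k$, i.e.\ $\eta_k(w)$ agrees with $v$ on the top $(t-2)\log k$ coordinates below $k$ — equivalently, $\eta_k(w)$ lies in the ``good'' subcube $S \subseteq V(\bG_{k-1}^{(v)})$ of vertices agreeing with $v$ on coordinates $k-1, k-2, \dots, k-\lceil(t-2)\log k\rceil$, which has size $|S| = 2^{k-1}/2^{\lceil(t-2)\log k\rceil} \geq 2^{k-1}/(2k^{t-2})$. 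The event $E_{k,t}$ fails for this pair precisely when $M_k(u)(B) \cap S = \emptyset$. Since $M_k(u)(B)$ is a uniformly random size-$|B|$ subset of a ground set of size $2^{k-1}$, the probability it misses $S$ is at most $(1 - |S|/2^{k-1})^{|B|} \leq \exp\!\big(-|B|\cdot|S|/2^{k-1}\big) \leq \exp\!\big(-(k/t)^t/(2k^{t-2})\big)$. The exponent is $(k/t)^t/(2k^{t-2}) = k^2/(2t^t)$, and since $4t^{2t} \leq k$ gives $t^t \leq \sqrt{k}/2$, this is at least $k^2/\sqrt{k} = k^{3/2} \geq n^{3/2}/(\log n)^3$; a slightly more careful choice of constants (e.g.\ absorbing the polylog loss by noting $k \geq n/(\log n)^2$ and $n$ is large) yields the clean bound $\exp(-n^{3/2})$ per pair, hence $\exp(-k^{3/2})$ after the union bound.

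The main obstacle is bookkeeping the exponents so that the deterministic ball lower bound $(k/t)^t$ genuinely beats the reciprocal good-subcube density $2k^{t-2}$ by a margin large enough to survive the union bound over $\exp(k^2)$ pairs; this is exactly where the hypothesis $4t^{2t} \leq k$ is used, and one must be careful that $t$ is allowed to grow with $n$ (so $t^t$ is not a constant) while still keeping $t^t = o(\sqrt{k})$. A secondary point requiring care is the independence structure: one must phrase the exposure so that $M_k(u)$ — the last matching generated inside the $\bG_k$ copy containing $u$ — is genuinely uniform and independent of everything determining $B$ and $S$, which is immediate from the recursive construction but should be stated explicitly. Everything else (the concentration step, the union bound arithmetic) is routine once these two points are pinned down.
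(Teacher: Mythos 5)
Your proposal follows essentially the same route as the paper's proof: fix a pair $(u,v)$ with $\alpha(u,v)=k$, expose the two copies of $\bG_{k-1}$, lower-bound the ball $B$ around $u$ by $(k/t)^t$ via \Cref{lem:balls}, observe that the matching sends $B$ to a uniformly random subset of the other copy, bound the probability of missing the target subcube by $(1-|S|/2^{k-1})^{|B|}$, and finish with a union bound over pairs. The one genuine slip is in your union-bound bookkeeping: since $k \geq n/(\log n)^2$, we have $k^2 \geq n^2/(\log n)^4 \gg n^{3/2}$, so $n^{3/2}$ does \emph{not} dominate $k^2$, and a per-pair bound of $\exp(-n^{3/2})$ is neither attainable from your estimate (your exponent is $k^{3/2}$, which can be as small as $n^{3/2}/(\log n)^3 < n^{3/2}$) nor needed. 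The repair is immediate from your own computation: the per-pair failure probability is $\exp(-\Theta(k^{3/2}))$ (the paper gets the cleaner constant $\exp(-2k^{3/2})$ by not paying your extra factor of $2$ in the size of $S$), and since the number of pairs is $2^{2n} = \exp(O(n))$ while $k^{3/2} \geq n^{3/2}/(\log n)^3 \gg n$, the union bound closes and yields $\exp(-k^{3/2})$. A cosmetic point: \Cref{lem:balls} has no hypotheses beyond $t, n \in \N$, so $4t^{2t} \leq k$ is not needed to apply it; that hypothesis enters only in the final step converting $k^2/t^t$ into $2k^{3/2}$.
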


Define for a graph $G$ the closed ball of
radius $t \in \N$ centred at $v \in V(G)$ to be $B_{G}(v, t) = \big\{w \in V(G) :
d_{G}(v, w) \leq t\big\}$.
To prove \Cref{lem:whp} we shall need the following deterministic lemma, which says that
closed balls in $\bG_n$ are always large.

\begin{lem}\label{lem:balls}
  Let $t, n \in \N$ and $v \in V(\bG_n)$.
  Then, deterministically, \[|B_{\bG_n}(v, t)| \geq \binom{n+1}{t}.\]
\end{lem}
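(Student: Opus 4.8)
The plan is to prove the stronger statement that, for every $v \in V(\bG_n)$, the ball $B_{\bG_n}(v,t)$ contains at least $\binom{n+1}{t}$ vertices, by induction on $n$, exploiting the recursive structure of $\bG_n$. Recall that $\bG_n$ is obtained from two copies $G_{n-1}^{(0)}$ and $G_{n-1}^{(1)}$ of $\bG_{n-1}$ together with a perfect matching $M_n$; say $v$ lies in $G_{n-1}^{(i)}$ and let $v' = \eta_n(v)$ be its match in $G_{n-1}^{(1-i)}$. The base cases ($n = 1$, or $t = 0$, or $t \geq n+1$) are immediate since $\binom{n+1}{t}$ is then $1$, $1$, or $0$ respectively, and a ball always contains its centre. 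For the inductive step, the key observation is that $B_{\bG_n}(v,t)$ contains $B_{G_{n-1}^{(i)}}(v, t)$ entirely (a path inside one copy stays inside it), and it also contains $B_{G_{n-1}^{(1-i)}}(v', t-1)$, since any vertex within distance $t-1$ of $v'$ in the other copy is within distance $t$ of $v$ in $\bG_n$ via the matching edge $vv'$. These two balls lie in disjoint vertex sets, so
\[
  |B_{\bG_n}(v,t)| \;\geq\; \big|B_{G_{n-1}^{(i)}}(v,t)\big| + \big|B_{G_{n-1}^{(1-i)}}(v',t-1)\big| \;\geq\; \binom{n}{t} + \binom{n}{t-1} \;=\; \binom{n+1}{t},
\]
where the second inequality is the induction hypothesis applied to each of $G_{n-1}^{(i)}$ and $G_{n-1}^{(1-i)}$, both of which are instances of $\bG_{n-1}$, and the final equality is Pascal's rule.

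Since $\binom{n+1}{t} \geq (n/t)^t$ for $t \leq n$ — indeed $\binom{n+1}{t} \geq \binom{n}{t} \geq (n/t)^t$ — this also yields the bound $|B_{\bG_n}(v,t)| \geq (n/t)^t$ referenced in the proof sketch, and in particular $|B| \geq (k/t)^t$ when applied to the relevant copy of $\bG_k$. The only genuinely delicate point is bookkeeping in the boundary cases of the binomial coefficient: one must check that the inequality $|B_{\bG_n}(v,t)| \geq \binom{n}{t} + \binom{n}{t-1}$ remains valid (and Pascal's rule still applies) when $t = n$ or $t > n-1$, where one of the summands may be an "empty" binomial coefficient equal to $0$ or $1$; using the convention $\binom{m}{j} = 0$ for $j > m$ makes everything go through uniformly. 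This is entirely routine, so I expect no real obstacle — the recursive decomposition does all the work, and the whole argument is a short induction.
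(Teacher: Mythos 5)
Your proof is correct, but it takes a genuinely different route from the paper's. You induct on $n$ using the recursive decomposition $\bG_n = G_{n-1}^{(i)} \cup G_{n-1}^{(1-i)} \cup M_n$: the ball of radius $t$ restricted to $v$'s copy has size at least $\binom{n}{t}$, the ball of radius $t-1$ around $\eta_n(v)$ in the other copy has size at least $\binom{n}{t-1}$, these are disjoint, and Pascal's rule finishes. The paper instead gives a direct, non-inductive argument: it exhibits an explicit injection from $\binom{[n]}{\leq t}$ into $B_{\bG_n}(v,t)$ by mapping $\{a_1 > \cdots > a_s\}$ to $(\eta_{a_s}\circ\cdots\circ\eta_{a_1})(v)$, and recovers the set from the image by repeatedly reading off $\alpha(v,w)$. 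The two arguments are essentially dual to each other (your induction unrolls into exactly this family of vertices), but the paper's version yields the slightly stronger bound $|B_{\bG_n}(v,t)|\geq\sum_{i=0}^{t}\binom{n}{i}$ in one pass, whereas yours arrives at $\binom{n+1}{t}$ directly and arguably reads more naturally given the recursive definition of the model. One small inaccuracy in your write-up: in the base case $n=1$ you assert $\binom{n+1}{t}=1$, but for $t=1$ one has $\binom{2}{1}=2$; the claim still holds there because $B_{\bG_1}(v,1)=V(K_2)$ has two elements, so this is a bookkeeping slip rather than a gap. The rest (disjointness of the two copies, the convention $\binom{m}{j}=0$ for $j>m$, and the validity of Pascal's rule in the boundary cases) is handled correctly.
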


\begin{proof}
	We will show that the function $f \colon \binom{[n]}{\leq t} \to B_{\bG_n}(v, t)$ given by
	\[ \big\{a_1, \ldots, a_s\big\} \mapsto \big(\eta_{a_s} \circ \cdots \circ \eta_{a_1}\big)(v), \]
	where $a_1 > \cdots > a_s$, is injective.
  From this, we will conclude the proof, since
	$$ |B_{\bG_n}(v, t)| \geq \sum_{i=0}^t\binom{n}{i} \geq \binom{n}{t-1} + \binom{n}{t} = \binom{n+1}{t}.$$
	Recall that, for every $u \in V(\bG_n)$, $u$ and $\eta_k(u)$ differ on the $k$-th coordinate and agree on every coordinate of greater index.
	We will show that every sequence $n \geq a_1 > \cdots > a_s$ can be uniquely recovered from the vertex $w = \big(\eta_{a_s} \circ \cdots \circ \eta_{a_1}\big)(v)$.

	To do so, first observe that $w$ is such that $v_{a_1} \neq w_{a_1}$ and $v_i = w_i$ for all $i > a_1$.
	Indeed, every other $a_i$ with $i>1$ is strictly smaller than $a_1$ and, as observed before, $\eta_{a_i}$ does not change coordinates strictly greater than $a_i$.
	By definition of $\alpha$, we therefore have $a_1 = \alpha(v, w)$.
	Replacing $v$ by $\eta_{a_1}(v)$, we may find $a_2$ and so on.
	The procedure stops when $v = w$, at which point we recover uniquely the sequence $(a_1,\ldots,a_s)$.
\end{proof}

We now turn our attention to \Cref{lem:whp}.
Observe that for any $k \leq n$ we have
\begin{equation}\label{eq:size-of-subcube}
	\big|\big\{v' \in V(\bG_n):\alpha(v',v) \leq  k\big\}\big| =  2^{k}.
\end{equation}
In the proof of \Cref{lem:whp}, we will use this fact, together with the bound in \Cref{lem:balls}, to show that $B_{G'}(u, t)$ intersects $\big\{w \in V(\bG_n):\alpha(\eta_k(w),v) \leq  k - (t-2) \log k\big\}$ with probability at least $1-\exp(-2k^{3/2})$.

\begin{proof}[Proof of \Cref{lem:whp}]
  Let $k,t \in \N$, and define $G^*(z) = \bG_n\big[\big\{w : w_i = z_i, \forall i \geq k \big\}\big]$ for $z \in V(\bG_n)$. Fix a pair $u,v \in V(\bG_n)$ such that $\alpha(u,v)=k$, and let $G' = G^*(u)$ and $G'' =G^*(v)$ be the instances of $\bG_{k-1}$ in $\bG_n$ containing $u$ and $v$, respectively, and consider
  \[ B' = B_{G'}(u, t) \quad\text{and}\quad B'' = \big\{ w \in V(G'') : \alpha(w, v) \leq k - (t-2)\log k \big\}. \]
  By \Cref{lem:balls}, applied to $G'$, and \eqref{eq:size-of-subcube}, respectively, we have
  \[ |B'| \geq \binom{k}{t} \geq \left(\frac{k}{t}\right)^t \qquad\text{and}\qquad|B''| \ge 2^{k - 1 - (t-2)\log k}. \]
  We emphasise that both of the above equations hold deterministically, and that
  so far we have only revealed $G'$ and $G''$.

  Let $A = A(u, v)$ be the event that $\eta_k(B') \cap B''  = \emptyset$. Since the matching connecting $G'$ and $G''$ is chosen independently of $G'$ and $G''$, it follows that (conditioned on the pair $G',G''$) $\eta_k(B')$ is a uniformly-chosen random subset of $V(G'')$ of size $|B'| \geq (k/t)^t$.
  We then obtain\footnote{If $2^{k-1} - |B''| < |B'|$, then the product is zero, and the inequality holds trivially.}
  \[ \pr(A) = \prod_{i=0}^{|B'|-1} \frac{2^{k-1} - |B''| - i}{2^{k-1} - i} \leq \big(1 - k^{-(t-2)}\big)^{|B'|} \leq \exp\left(-\frac{k^2}{t^t}\right), \]
  and therefore $\pr(A) \leq \exp(-2k^{3/2})$ by the hypothesis $k \geq 4 t^{2t}$.
  Since $E_{k,t}^c$ implies that $A(u,v)$ holds for some $(u,v) \in V(\bG_n)^2$ with $\alpha(u,v) = k$, we deduce by the union bound that
  \begin{equation}\label{eq:union-bound-over-A-uv}
    \pr(E_{k,t}^c) \leq 2^{2n}\cdot \exp\big(-2k^{3/2}\big) \leq \exp\big(-k^{3/2}\big),
  \end{equation}
  where the last inequality uses the hypothesis $k \geq n/(\log n)^2$.
\end{proof}

To highlight that the non-independence of the samples of $\bG_k$ inside $\bG_n$ is not an issue, consider the following alternative way of understanding the union bound in \eqref{eq:union-bound-over-A-uv}.
The event whose probability we bound is the existence of a sample of $\bG_k$ inside $\bG_n$ with the following property: there are two vertices $u, v \in V(\bG_n)$ inside this specific sample of $\bG_k$ such that $\alpha(u, v) = k$ and $A(u,v)$ holds.

We are now ready to prove \Cref{thm:diam}.

\begin{proof}[Proof of \Cref{thm:diam}]
  As observed in \citep{benjamini2022duplicube}, the lower bound follows from observing that, in an $n$-regular graph, there are at most $2n^d$ vertices within distance $d$ from any given vertex.

  To prove the upper bound, let $n_0 = n/(\log n)^2$ and $t = \frac{\log n_0}{4 \log \log n_0}$, which satisfies $n_0 \geq 4 t^{4t}$ if $n$ is large enough.
  Moreover, let $E_{k,t}$ be the events defined in \Cref{lem:whp}. By the union bound, the event $E := \bigwedge_{k=n_0}^n E_{k,t}$ holds with probability at least $1 - \sum_{k=n_0}^n \exp(-k^{3/2}) = 1 - o(1)$.

  We now assume that $E$ holds and show that $\diam(\bG_n) = \big(1+o(1)\big)n/\log n$. To do so, we consider arbitrary $u, v \in V(\bG_n)$ and construct a sequence of vertices $u^{(0)}, \ldots, u^{(L)} \in V(\bG_n)$ satisfying
  \begin{equation}\label{eq:induction-properties}
    u^{(0)} = 0, \qquad d(u^{(i)}, u^{(i+1)}) \leq t+1 \quad\text{and}\quad \alpha(u^{(i+1)}, v) \leq \alpha(u^{(i)}, v) - (t-2)\log n_0
 \end{equation}
 for every $0 \leq i < L$, as well as satisfying $\alpha(u^{(L)}, v) < n_0$.

To construct the desired sequence, we start by setting $u^{(0)} = u$. Assuming inductively that we have constructed $u^{(0)}, \ldots, u^{(i)}$ satisfying the above properties, we do the following. If $\alpha(u^{(i)}, v) < n_0$, we set $L = i$ and finish the process. Otherwise, set $k := \alpha(u^{(i)}, v)$; by the assumption that $E_{k,t}$ holds, there exists a vertex $w$ such that
\[ d(u^{(i)}, w) \leq t\qquad\text{and}\qquad \alpha(\eta_{k}(w), v) \leq \alpha(u^{(i)}, v) - (t-2)\log n_0. \]
We may then set $u^{(i+1)} := \eta_{k}(w)$, which satisfies~\eqref{eq:induction-properties}, and continue inductively.
At the end of the iteration, we have $d(u^{(L)}, v) \leq \alpha(u^{(L)}, v) < n_0$. Therefore, by the second property in~\eqref{eq:induction-properties},
\begin{equation}\label{eq:total-dist}
  d(u^{(0)}, v) \leq d(u^{(L)}, v) + \sum_{i=0}^{L-1} d(u^{(i)}, u^{(i+1)}) \leq n_0 + (t+1)L.
\end{equation}
Moreover, by the third property,
\[ L \cdot (t-2)\log n_0 \leq \sum_{i=0}^{L-1} \big(\alpha(u^{(i)}, v) - \alpha(u^{(i+1)}, v)\big) \leq n, \]
since the sum telescopes. Rearranging to obtain an upper bound on $L$, and plugging it into~\eqref{eq:total-dist}, we deduce that
\begin{equation}\label{eq:final_eq}
	d(u,v) = d(u^{(0)}, v) \leq n_0 + \frac{t+1}{t-2} \cdot \frac{n}{\log n_0} = \big(1+o(1)\big)\frac{n}{\log n},
\end{equation}
where the last equality follows from $n_0 = o(n/\log n)$, $\log n_0 = (1+o(1))\log n$ and $t = \omega(1)$. Since $u$ and $v$ were arbitrary, the result follows.
\end{proof}

\begin{remark}
	In fact, the last step in \eqref{eq:final_eq} can be refined to obtain
	\[\diam(\bG_n)=\frac{n}{\log n}+O\left(\frac{n \log\log n}{(\log n)^2}\right)\]
	with high probability.
\end{remark}

\section{Acknowledgements}

We would like to thank Rob Morris for many suggestions that significantly improved this note.
We would also like to thank him and Joshua Erde for helpful discussions, and the anonymous referees for their comments.

This study was financed in part by the Coordenação de Aperfeiçoamento de Pessoal de Nível Superior, Brasil (CAPES), Finance Code 001.
The second author was partially supported by the Conselho Nacional de Desenvolvimento Científico e Tecnológico, Brasil (CNPq) grant 406248/2021-4.
This research was funded in whole, or in part, by the Austrian Science Fund
(FWF) P36131. For the purpose of open access, the author has
applied a CC BY public copyright licence to any Author Accepted Manuscript
version arising from this submission.

\bibliographystyle{abbrvnat}
\bibliography{bib}

\begin{thebibliography}{3}
\providecommand{\natexlab}[1]{#1}
\providecommand{\url}[1]{\texttt{#1}}
\expandafter\ifx\csname urlstyle\endcsname\relax
  \providecommand{\doi}[1]{doi: #1}\else
  \providecommand{\doi}{doi: \begingroup \urlstyle{rm}\Url}\fi

\bibitem[Benjamini et~al.(2022)Benjamini, Dikstein, Gross, and
  Zhukovskii]{benjamini2022duplicube}
I.~Benjamini, Y.~Dikstein, R.~Gross, and M.~Zhukovskii.
\newblock Randomly twisted hypercubes -- between structure and randomness,
  2022.
\newblock arXiv:2211.06988.

\bibitem[Bollob{\'a}s and Fernandez de~la Vega(1982)]{bollobas1982diameter}
B.~Bollob{\'a}s and W.~Fernandez de~la Vega.
\newblock The diameter of random regular graphs.
\newblock \emph{Combinatorica}, 2\penalty0 (2):\penalty0 125--134, 1982.

\bibitem[Dudek et~al.(2018)Dudek, P{\'e}rez-Gim{\'e}nez, Pra{\l}at, Qi, West,
  and Zhu]{dudek2018randomly}
A.~Dudek, X.~P{\'e}rez-Gim{\'e}nez, P.~Pra{\l}at, H.~Qi, D.~West, and X.~Zhu.
\newblock Randomly twisted hypercubes.
\newblock \emph{European Journal of Combinatorics}, 70:\penalty0 364--373,
  2018.

\end{thebibliography}

\end{document}